\documentclass[a4paper,reqno]{amsart}
\usepackage{amssymb,amscd}
\usepackage{color, enumerate}

\definecolor{darkgreen}{rgb}{0.0, 0.5, 0.0}

  \newcommand{\w}{\omega}
  
\newcommand{\fg}{\mathfrak{g}}
  
   \newcommand{\R}{\mathbb{R}}

\newtheorem{proposition}{{\bf Proposition}}

\newtheorem{theorem}{{\bf Theorem}}
\newtheorem{corollary}{{\bf Corollary}}
\newtheorem{definition}{ {\bf Definition}}

\newtheorem{remark}{Remark}

\hyphenation{ge-ne-ra-li-za-tions po-si-ti-vi-ty e-qui-va-lent
ele-men-ta-ry va-ni-shes re-fe-ren-ce con-si-de-red Se-ve-ral
re-la-ti-ve ma-ni-folds approxi-ma-te-ly or-tho-go-nal com-pa-ti-ble
res-tric-tion fo-lia-ted mo-di-fied pro-ducts ge-ne-rally
trans-ver-sa-li-ty mi-ni-mum cons-tant cons-truc-tions Do-nald-son
different ma-ni-fold geo-me-try using de-fi-ni-tion exact-ly
stra-ti-fi-ca-tion pro-blems boun-da-ry cons-truc-tion fra-ming
pa-ra-me-tri-zed des-cri-bed}

\begin{document}

\author{David Mart\'inez Torres} \address{ Departamento de Matem\'{a}tica
PUC Rio de Janeiro \\ Rua Marqu\^{e}s de S\~{a}o Vicente, 225 \\ G\'{a}vea, Rio de Janeiro - RJ, 22451-900
\\ Brazil}\email{dfmtorres@gmail.com}

\author{Eva Miranda}\address{Department of Mathematics, Universitat Polit\`{e}cnica de Catalunya \\ EPSEB, Edifici P, Avinguda del Doctor Mara\~{n}\'{o}n, 42-44,
 Barcelona, Spain}
\email{eva.miranda@upc.edu}  \thanks{   Eva Miranda is supported by the  Ministerio de Econom\'{\i}a y Competitividad project with reference MTM2015-69135-P.  David Mart\'{\i}nez Torres' research visits to Barcelona concerning this paper were supported by  its precursor MTM2012-38122-C03-01/FEDER }

\title{Weakly Hamiltonian actions}
 \maketitle

 \begin{abstract} In this paper we generalize constructions of non-commutative integrable systems to the context of weakly
 Hamiltonian actions on Poisson manifolds.  In particular we prove that abelian weakly Hamiltonian actions
 on symplectic  manifolds split into Hamiltonian and non-Hamiltonian factors, and explore generalizations in the Poisson setting.

 \end{abstract}

\section{Introduction}

An integrable system on a $2n$-dimensional symplectic manifold  is given by $n$ generically independent pairwise commuting functions.
More generally, a non-commutative integrable system is determined by a set of $2n-r$ integrals ($r\leq n$), out of which $r$ do pairwise commute.
Integrable systems come with \emph{infinitesimal} abelian actions which are Hamiltonian, in the sense that they have  an equivariant \emph{momentum map}.

Furthermore, under compactness assumption on the invariant sets these infinitesimal abelian actions
integrate into a torus action for which there is a normal form (action-angle coordinates).

 However, some discrete integrable systems \cite{suris} do not present commuting first integrals but rather commuting flows. Moreover, there
 are systems that become Hamiltonian after reduction by non-commutative symmetries.
 This justifies considering a more general framework where \emph{weakly Hamiltonian actions} take over Hamiltonian actions.
 In this paper we look at (infinitesimal) actions of abelian Lie algebras on Poisson manifolds having first integrals, but that cannot be arranged into
 an equivariant momentum map.
  Our main purpose is discussing conditions under which one can
   still find
 ``invariant subsets'' (Poisson submanifolds)  where
the residual action is indeed Hamiltonian.

In the symplectic setting there is a already
a result of Souriau in this direction  (\cite{souriau}, Theorem 13.15), which
gives a conceptual explanation of a classical theorem of K\"{o}nig on the decomposition of both the kinetic energy and the motion of a system. However, this
is a result for actions of arbitrary Lie groups (the Galilean group in  K\"{o}nig's theorem) and our main point here is to stress that in the abelian case one can go further
and obtain a splitting not just of the symplectic manifold, but also of the action into a weakly Hamiltonian and a Hamiltonian factor.
Moreover, in the much richer Poisson setting we will see how mild conditions on the action determine Poisson submanifolds,
and how under stronger hypotheses a splitting for the Poisson structure can be found, and in some cases, even for the action. In fact, our global construction --when specialized
to a local setting-- has strong reminiscences of the classical Weinstein local splitting theorem
in Poisson geometry \cite{Weinstein},
and we expect to be able to relate other local splitting results with our construction. Finally our techniques --which are different from Souriau's--
are also appropriate  to draw global conclusions for actions of non-abelian Lie algebras.

\textbf{Acknowledgements:} We are deeply thankful to Alexey Bolsinov for very useful discussions about
the examples in this paper and and for drawing our attention to references \cite{patera,ooms1,ooms2}.
We are also thankful to Yuri Fedorov and Bo\v{z}idar Jovanovi\'{c} for interesting remarks. We are indebted
to the referee for the valuable comments specially those linking this work to the work of Souriau \cite{souriau}.
\section{Motivating examples}
 We start by presenting three different types of (complete) weakly Hamiltonian abelian actions in symplectic and Poisson manifolds, and we close
the section with a related
example.

\subsection{Standard action by translations}\label{sec:standard}

The paradigm of abelian symmetries by Hamiltonian vector fields, but not fitting into a Hamiltonian action, is that of a symplectic vector space $(V,\sigma)$ on which
$V$ itself -- seen as an abelian Lie group -- acts by translations: to each vector $v\in V$ we can assign a first integral which is the unique linear function
 $H_u\in V^*$ such that $dH_u=i_u\sigma$ (or rather, an affine one with that linear part). Since we have:
\[\{H_u,H_v\}=\sigma(u,v)\]
we will never be able to find a basis of first integrals in involution.  Notwithstanding, any such choice of first integrals yields a weakly Hamiltonian action (indeed a non-commutative system with constant brackets).

\subsection{The Galilean group}\label{sec:Galilean}

Let $G(3)$ be the Galilean group and consider its standard  representation (cf. (13.7) in \cite{souriau})
on $T^*\mathbb R^3$ with position and momentum coordinates $q_i$, $p_i$ and particle mass $m$. Recall that $G(3)$ is an extension of the Euclidean group $E(3)$, and  the restriction of this representation to $E(3)$ is Hamiltonian because it is the cotangent lift of its defining action on $\mathbb R^3$. However the Hamiltonian functions corresponding to the Galilean boosts $mq_i$ and translations in the same direction  $p_i$  do not commute; indeed their Poisson bracket is the mass, and the corresponding cocycle is not exact, bringing in another example of weakly Hamiltonian action.

\subsection{Weakly Hamiltonian actions and nilpotent Lie algebras}

The symplectic form on the symplectic vector space $(V,\sigma)$ can be interpreted as a 2-cocycle, and as such it gives rise to $\mathfrak{g}=\R\oplus_\w V$ a central
extension of the abelian algebra $V$. This Heisenberg type-Lie algebra is nilpotent with one dimensional center. The coadjoint
orbit corresponding to the affine hyperplane:
\[\{\alpha\in \mathfrak{g}^*\,|\, \alpha(1,0)=1\}\] can be canonically identified with $V$, and the restriction of the
coadjoint action to this orbit is the linear action by translations above (\ref{sec:standard}).

More generally, let $\mathfrak{g}$ be a nilpotent Lie algebra such that $[\mathfrak{g},\mathfrak{g}]\subset \mathfrak{z}(\mathfrak{g})$ (a 2-step
nilpotent Lie algebra). As brackets lie in the center, they become Casimirs as functions on $\mathfrak{g}^*$, and therefore constants on coadjoint orbits. Then
any subspace of $\mathfrak{g}$ intersecting trivially with the center provides an abelian Lie algebra acting in a weakly Hamiltonian fashion
(which is not Hamiltonian provided that some of the brackets
are non-trivial) on any coadjoint orbit (and in fact on the whole $\mathfrak{g}^*$).
We illustrate this with the following low dimensional example (additional ones can be found by inspecting the list of low dimensional nilpotent
Lie algebras up to dimension 7  (\cite{patera}, \cite{ooms1}, \cite{ooms2})):

The nilpotent Lie algebra of dimension 6, $A_{6,5}^a$ (for $a\neq 0$) for which the non-vanishing
relations on a base (see table III in \cite{patera}) are $[e_1,e_3]=e_5$, $[e_1,e_4]=e_6$, $[e_2,e_3]=ae_6$, $[e_2,e_4]=e_5$.
In this case, the symplectic foliation by coadjoint orbits is given by $e_6^*$ and $e_5^*$, thus defining a foliation with regular
$4$-dimensional symplectic leaves away from zero. The subspace spanned by $e_1,e_2,e_3$ and $e_4$ acts by commuting Hamiltonian vector fields but without
momentum map, providing an example of weakly Hamiltonian action on  a Poisson manifold.

\subsection{Related examples}

In \cite{suris},  motivated by the study of discrete integrable systems,   the ``multi-time'' Legendre transform is applied  to  multi-time Euler-Lagrange equations to obtain a system of commuting Hamiltonian flows.  As observed in \cite{suris} this situation corresponds to having
functions with constant Poisson brackets but these brackets are not necessarily zero\footnote{As observed by the author in \cite{suris} indeed
the vanishing of these Poisson  brackets is equivalent to  Lagrangian 1-form employed in the construction being closed on
the solutions of the Euler-Lagrange equations.}, thus providing an extra motivation to consider weakly Hamiltonian actions.

 \section{Weakly Hamiltonian actions and real analytic functions}

 In this section we discuss how real analytic functions become a tool to study weakly Hamiltonian actions.
\begin{definition}\label{def}  Let $(M,\pi)$ be a  Poisson manifold and let
 \begin{eqnarray*}
\alpha:
\mathfrak{g} & \longrightarrow &\mathrm{poiss}(M,\pi)\\
u & \longmapsto & X_u
\end{eqnarray*}
be an action by Poisson
vector fields.
The action is {\bf weakly Hamiltonian}  if the fundamental vector fields for the action are Hamiltonian vector fields.
The action is called complete if all fundamental vector fields are complete.
  \end{definition}

If $\alpha:\mathfrak{g}\rightarrow \mathrm{ham}(M,\pi)$ is a weakly Hamiltonian action, then it can always be lifted
to a linear map:
\begin{eqnarray*}
 \rho\colon \mathfrak{g}&\longrightarrow & C^\infty(M)\\
  u &\longmapsto & H_u.
\end{eqnarray*}
As it is well-known, the defect from the action  being Hamiltonian is measured by the  Casimir-valued  $2$-cocycle:
\begin{eqnarray}\label{eq:cocycle}\nonumber
c\colon \mathfrak{g}\times\mathfrak{g} &\longrightarrow & \mathrm{Cas}(M)\\
     (u,v) &\longmapsto & \{H_u,H_v\}-H_{[u,v]}.
\end{eqnarray}
More precisely, $c\in \Omega^2_{CE}(\mathfrak{g};\mathrm{Cas}(M))$ is a cocycle in the Chevalley-Eilenberg complex for the trivial
representation of $\mathfrak{g}$
on $\mathrm{Cas}(M)$, which is:
\begin{itemize}
 \item zero if and only if  the chosen lift  defines a Lie algebra morphism (this is equivalent to the mapping $\rho$  being equivariant);
 \item exact if and only if there exists a choice
of lift which is a morphism of Lie algebras.
\end{itemize}

\begin{definition}\label{def:flow}
To any complete weakly Hamiltonian action $\rho\colon \mathfrak{g}\rightarrow C^\infty(M)$, we assign its {\bf flow evaluation map}:
\begin{eqnarray}\label{eq:flow}\nonumber
\zeta\colon \mathfrak{g}\times\mathfrak{g}\times M &\longrightarrow &
C^\infty(\R)\\
     (u,v,x) &\longrightarrow & H_u(\phi^s_v(x)),
\end{eqnarray}
where $\phi^s_v$ denotes the flow of $X_v$.
\end{definition}

Note that if the action $\rho$ is Hamiltonian with momentum map $\mu$, then the flow evaluation map is the result of pulling back via the momentum map
the flow evaluation map for the coadjoint action: indeed, because the momentum map is
$\mathfrak{g}$-equivariant we have
\begin{equation}\label{eq:coadflow2}
\zeta_{u,v,x}(s)=H_u(\phi^s_v(x))=H_u(\phi^s_v\cdot x)=\langle\mathrm{Ad}^*({\Phi^s_v
})(\mu(x)),u\rangle
\end{equation}
where $\Phi^s_v\in G(\fg)$ -- the simply connected Lie group integrating $\fg$ --
and  $\mathrm{Ad}^*({\Phi^s_v})(\mu(x))$ is the corresponding coadjoint  flow of $v$ starting at $\mu(x)$.
Therefore, the $\zeta_{u,v,x}$'s generalize the linear projections of the coadjoint flow  to
the complete weakly Hamiltonian case.

The coadjoint flow is real analytic, and each of its projections (\ref{eq:coadflow2}) is a real analytic function which extends to an entire
function of exponential type (growth). For example, this can be checked by noting that to analyze the coadjoint action it suffices to use matrix groups.
For complete weakly Hamiltonian actions this property still holds (cf. \cite{Hu}, lemma 6,
where real analyticity is studied in the  setting of quasi-representations on symplectic manifolds):

\begin{proposition}\label{pro:main} For any triple $(u,v,x)\in
\mathfrak{g}\times\mathfrak{g}\times M$ the corresponding flow evaluation $\zeta_{u,v,x}$ is real
analytic with expansion at zero:
\begin{equation}\label{eqn:expansion}
\zeta_{u,v,x}(s)=\sum_{j=0}^\infty
H_{\mathrm{ad}^j(v)(u)}(x)\frac{s^j}{j!}+\sum_{j=1}^\infty
c_{\mathrm{ad}^{j-1}(v)(u),v}(x)\frac{s^j}{j!}.
\end{equation}
Moreover, it extends to an entire function of exponential type.
\end{proposition}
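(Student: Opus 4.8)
The plan is to reduce the assertion to a finite-dimensional linear ordinary differential equation with constant coefficients: such an equation is solved by matrix exponentials and is automatically real analytic and entire of exponential type, so this reduction sidesteps the fact that the ambient flow $\phi^s_v$ is only smooth. To set it up I first package all the evaluations into a single curve in the finite-dimensional dual. For fixed $(v,x)$ define $\Psi\colon\R\to\fg^*$ by $\langle\Psi(s),w\rangle:=H_w(\phi^s_v(x))$ for every $w\in\fg$; this is well defined because $\rho$ is linear and $\fg$ is finite-dimensional, it is smooth in $s$ because $\phi^s_v$ is, and by construction $\zeta_{u,v,x}(s)=\langle\Psi(s),u\rangle$ with $\langle\Psi(0),w\rangle=H_w(x)$.

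The key step is to differentiate and observe that the recursion closes. For any $w\in\fg$,
\[
\frac{d}{ds}\langle\Psi(s),w\rangle=(X_vH_w)(\phi^s_v(x))=\{H_v,H_w\}(\phi^s_v(x))=H_{[v,w]}(\phi^s_v(x))+c_{v,w}(\phi^s_v(x)),
\]
the last equality being the definition (\ref{eq:cocycle}) of the cocycle. Here lies the crucial point: since $c_{v,w}\in\mathrm{Cas}(M)$ it is constant along the Hamiltonian flow $\phi^s_v$, so $c_{v,w}(\phi^s_v(x))=c_{v,w}(x)$ does not depend on $s$. Writing $A\colon\fg^*\to\fg^*$ for the transpose of $\mathrm{ad}(v)$ and $\gamma\in\fg^*$ for the linear functional $w\mapsto c_{v,w}(x)$, this identity says precisely that $\Psi$ solves the inhomogeneous constant-coefficient system
\[
\Psi'(s)=A\,\Psi(s)+\gamma,\qquad \Psi(0)=\mu_x,\ \ \langle\mu_x,w\rangle=H_w(x).
\]
I expect this to be the delicate point: it is exactly the combination of finite-dimensionality of $\fg$ with the Casimir-valuedness of the cocycle that turns the a priori infinite hierarchy of flow derivatives into a closed finite-dimensional linear equation.

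It then remains to read off the conclusions from the explicit solution $\Psi(s)=e^{sA}\mu_x+\big(\int_0^s e^{(s-\tau)A}\,d\tau\big)\gamma$. Being built from matrix exponentials, $\Psi$ is real analytic and extends to an entire $\fg^*$-valued function of $s$ of exponential type at most $\|A\|=\|\mathrm{ad}(v)\|$; pairing with $u$ transfers both properties to $\zeta_{u,v,x}=\langle\Psi(\cdot),u\rangle$. Expanding the exponentials, using $\langle e^{sA}\mu_x,u\rangle=\langle\mu_x,e^{s\,\mathrm{ad}(v)}u\rangle=\sum_{j\ge0}H_{\mathrm{ad}^j(v)(u)}(x)\,s^j/j!$ and $\int_0^s e^{(s-\tau)A}\,d\tau=\sum_{k\ge0}A^k s^{k+1}/(k+1)!$, reproduces the two series of (\ref{eqn:expansion}) up to the sign conventions chosen for $X_v$ and for $c$. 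A direct alternative for the exponential type, bypassing the ODE, is to differentiate in place: $\frac{d^j}{ds^j}\big|_{0}\zeta_{u,v,x}=(X_v^jH_u)(x)=H_{\mathrm{ad}^j(v)(u)}(x)+c_{v,\mathrm{ad}^{j-1}(v)(u)}(x)$, and then to bound this by $K_x(1+\|\mathrm{ad}(v)\|)^j$ using $\|\mathrm{ad}^j(v)(u)\|\le\|\mathrm{ad}(v)\|^j\|u\|$ and the boundedness of the evaluation functionals $w\mapsto H_w(x)$ and $w\mapsto c_{v,w}(x)$ on the finite-dimensional $\fg$.
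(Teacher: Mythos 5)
Your proof is correct, and it takes a genuinely different route from the paper's. The paper iterates the fundamental theorem of calculus to generate the expansion (\ref{eqn:expansion}) term by term, and then establishes convergence, analyticity and exponential type by hand, bounding the Taylor remainders on compact sets using continuity of $u\mapsto H_u(y)$ and of the cocycle. You instead package the whole hierarchy into the single curve $\Psi(s)\in\fg^*$, $\langle\Psi(s),w\rangle=H_w(\phi^s_v(x))$, and show it solves a finite-dimensional inhomogeneous linear ODE with constant coefficients, $\Psi'=\mathrm{ad}(v)^{t}\Psi+\gamma$; analyticity, entireness and the exponential type (with the explicit bound $\|\mathrm{ad}(v)\|$) then all follow at once from the matrix-exponential solution formula. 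Both arguments pivot on exactly the fact you single out: the cocycle is Casimir-valued, hence constant along $\phi^s_v$ --- in the paper this is what turns the integrated cocycle term into $c_{u,v}(x)s$ at each inductive step, in your argument it is what closes the system into finitely many unknowns. Your formulation has the added virtue of making the paper's subsequent remark transparent: your ODE is precisely the defining equation of the cocycle-corrected (affine) coadjoint flow, so $\zeta_{u,v,x}$ is manifestly a linear projection of that flow, obtained without invoking a momentum map or equivariance; the paper's hands-on proof, by contrast, is more elementary and self-contained. The sign discrepancies you flag are real but harmless: the paper itself is not internally consistent on this point (its computation produces $H_{[u,v]}$ where the statement has $H_{\mathrm{ad}(v)(u)}$), and all versions agree up to the conventions for $X_v$ and the antisymmetry of $c$. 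One caution about your closing ``direct alternative'': bounds on the derivatives of $\zeta_{u,v,x}$ at $s=0$ alone do not prove real analyticity --- for that one needs the same bounds at all points of a neighborhood (equivalently, along the orbit, where they do hold uniformly on compacts, since $X_v^jH_u=H_{\mathrm{ad}^j(v)(u)}+c_{v,\mathrm{ad}^{j-1}(v)(u)}$ everywhere); as you present it, merely as a shortcut to the exponential type once the series representation is known from the ODE, it is fine.
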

\begin{proof}
The fundamental theorem of calculus yields:
\[\begin{split}\zeta_{u,v,x}(s)=\zeta_{u,v,x}(0)+\int_0^s\zeta'_{u,v,x}
(t)dt=\zeta_{u,v,x}(0)+\int_0^s
dH_u(X_v(\phi_v^t(x))dt\\=\zeta_{u,v,x}(0)+\int_0^s\{H_u,H_v\}(\phi_v^t(x))dt=
\zeta_{u,v,x}(0)+\int_0^s
H_{[u,v]}(\phi_v^t(x))dt+c_{u,v}(x)s.
 \end{split}\]
The formula in (\ref{eqn:expansion}) follows by induction. In order to check
convergence of the power series expansion fix any metric in $\mathfrak{g}$, and
pick a neighborhood of the origin $W\subset \mathfrak{g}$ so that
\[|[u,v]|\leq C|u||v|,\, \forall \in u,v\in W.\]
The linear maps  $u\mapsto H_u(y)$ are continuous, and also vary continuously
with $y$.  The bilinear map $(u,v)\mapsto c_{u,v}(x)$ is also continuous. Therefore
the norm of the remainder of the Taylor expansion can be bounded in any compact
subset as follows:
\[\begin{split}\left|\int_0^{s_1}\cdots
\int_0^{s_{j+1}}H_{\mathrm{ad}^j(v)(u)}(\phi^{s_{j+1}}(c))ds_{j+1}\cdots ds_1 +
c_{\mathrm{ad}^{j+1}(v)(u),v}(x)\frac{s^{j+1}}{(j+1)!}\right|\leq\\
K_1|u|\frac{(C|v|s)^{j+1}}{(j+1)!}+K_2|u|\frac{(C|v|s)^{j+1}}{(j+1)!}\end{split}
\]
Then uniform convergence on any compact subset is straightforward.

Nearly the same proof shows  that the power expansion has
coefficients whose norm is dominated by those of an exponential, and therefore
the desired result follows.

\end{proof}

One can roughly rephrase Proposition \ref{pro:main}  by saying that a complete
weakly Hamiltonian action on a Poisson manifold gives a large supply of very
special analytic functions (non-trivial except for abelian Hamiltonian actions).

\begin{remark} If $\rho$ is a complete weakly Hamiltonian action on a symplectic manifold,
then the momentum map becomes $G(\mathfrak{g})$-equivariant for the (affine) correction
of the coadjoint action by the integration of the cocycle (\ref{eq:cocycle}) (\cite{souriau}, \cite{LM,marle} and \cite{houard}). The flow evaluation map becomes again
the pullback of the linear projections for this
corrected action, for which formula (\ref{eqn:expansion}) can readily be obtained.
Indeed it can be  easily checked that in the symplectic setting the following formula is obtained for the flow evaluation map where $\theta$ is the 1-cocycle associated to the action,
$$\zeta_{u,v,x}(s)=\left< \mu(x), Ad^*_{\exp(-sv)}u \right>+ \left< \theta(\exp(sv)),u \right>.$$
 We chose not to ``embed" weakly Hamiltonian
actions into the equivariant framework
both to make this note self-contained and
because this cannot be done in general for Poisson manifolds.
\end{remark}

\begin{remark} In \cite{Ku} it is shown that any action of a compact group on a symplectic
manifold is equivalent to an analytic one (also the symplectic form being
analytic). Thus if the action is weakly Hamiltonian one obtains automatically
that the functions $\zeta_{u,v,x}$ are analytic.  Our result for symplectic manifolds is more general in
the sense that it only requires a complete Lie algebra action, and it makes
clear the dependence of the expansion of $\zeta_{u,v,x}$ in terms of the Lie
algebra structure and the cocycle $c$.
\end{remark}

\section{ Splitting of the structures}
 K\"{o}nig's theorem \cite{konig} establishes that the  kinetic energy of a system  is the sum of the kinetic energy associated to
the movement of the center of mass and the kinetic energy associated to the movement of the particles relative to the center of mass ( the so-called internal energy).
In particular, the motion of a particle can be decomposed as the motion of its center of mass (which by the conservation of linear momentum moves
uniformly in a line) and the motion of the system around its center of mass.  K\"{o}nig's theorem is interpreted in \cite{souriau}, paragraph 13.25,
as a decomposition theorem associated
to the normal abelian subgroup of the Galilean group corresponding to fixing the identity matrix in $\mathrm{SO}(3,\mathbb R)$ in its matrix representation.

In this section we generalize Souriau's decomposition theorem in several directions. On the one hand, we discuss actions on Poisson manifolds. On the other hand,
because we only discuss abelian Lie algebras, we can find sufficient conditions to split also the action into a translational standard factor and a Hamiltonian factor.
This makes precise the idea
that in some systems after reducing by translations (or restricting to appropriate slices to the translation action), what we get is a Hamiltonian system (and sometimes
a completely integrable one).

\subsection{The Poisson case}

Let $\rho:\mathfrak{g}\to C^\infty(M)$ be a weakly Hamiltonian action with cocycle $c$. Let us consider $\mathfrak{g}\times M\to M$
as a trivial vector bundle over $M$. For simplicity, let us assume that $M$ is connected. A Poisson manifold is naturally endowed with a symplectic foliation $\mathcal F$ whose tangent distribution is generated by the set of all Hamiltonian vector fields. As such it has an associated leaf space $M/\mathcal F$.

\begin{definition} We say that $F\to M$ is a sub-bundle of $\mathfrak{g}\times M$ over the leaf space if $F\to M$ is a sub-bundle such that
 the fibers $F_x$ and $F_y$ are equal  whenever $x,y\in M$ are in the same symplectic leaf.

A framing for a sub-bundle over the leaf space is a global trivialization $\mathfrak{b}:F\to \R^l$ which is constant over the symplectic leaves.
\end{definition}

We are interested in sub-bundles $F\to M$ over the leaf space which are symplectic relative to the cocycle $c\in \Omega^2_{CE}(\mathfrak{g};\mathrm{Cas}(M))$. By this we mean that for all $x\in M$
the constant 2-form $c(x)$ restricts  to a symplectic form along $F_x$.

Such sub-bundles give rise to an interesting structure on $(M,\pi)$:
\begin{theorem}\label{thm:1}  Let $\rho\colon \fg\rightarrow C^\infty(M)$ be
a complete weakly Hamiltonian action of an abelian Lie algebra and let $c$ be its associated cocycle. If $ F\to M$
is a vector sub-bundle over
the leaf space
 which is symplectic relative to $c$, then it determines:
 \begin{enumerate}
  \item A Poisson-Dirac submanifold $N$.
  \item A diffeomorphism
  \[\Phi: M\to F|_N\]
  such that the inverse image of the fiber $F_x$, $x\in N$,  is a symplectic submanifold $L_x$ of the leaf through $x$ and
  \[\Phi: (L_x,\pi^{-1}|_{L_x})\to (F_x,c(x))\]
  is a symplectomorphism.
  \end{enumerate}
  Moreover, if $F\to M$ has a framing $\mathfrak{b}$ over the leaf space, then the diffeomorphism has target a product manifold (trivial bundle)
  \begin{equation}\label{eq:split1}
   \Phi\to M\rightarrow \R^{2d}\times N,
  \end{equation}
 and
   $\Phi^{-1}(N\times \{u\})$, $u\in \R^{2d}$, is a Poisson-Dirac submanifold of $(M,\pi)$.

\end{theorem}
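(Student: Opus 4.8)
The plan is to extract from the hypotheses a single driving computation and then read off all of the assertions from it. Since $\fg$ is abelian the cocycle is simply $c(u,v)=\{H_u,H_v\}$, and it is Casimir-valued, hence constant along the symplectic leaves $\cL$ of $\cF$. First I would record that, for $u,v\in\fg$ and $x\in M$, differentiating $H_u$ along the (complete) flow $\phi^s_v$ of $X_v$ gives $\tfrac{d}{ds}H_u(\phi^s_v(x))=\{H_u,H_v\}(\phi^s_v(x))=c(u,v)(x)$, a constant; this is the degree-one truncation already visible in the proof of Proposition \ref{pro:main}, so each $H_u$ is \emph{affine} along such flows. Next I would form the distribution $D_x=\{X_u(x):u\in F_x\}\subset T_x\cF$. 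Because $F_x$ depends only on the leaf and the $X_u$ commute, $D$ is involutive, and being of constant rank it is a regular foliation; its integral manifolds are the candidate $L_x$. The assumption that $c(x)$ is symplectic on $F_x$ enters here: the leaf symplectic form pulls back under $u\mapsto X_u(x)$ to $c(x)|_{F_x}$, so that map is injective, $\dim D_x=2d$, and $\pi^{-1}$ restricts nondegenerately to $D_x$.

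With this, I would study the leafwise momentum $\mu_x\colon L_x\to F_x^{*}$, $y\mapsto\bigl(u\mapsto H_u(y)\bigr)$. The computation above shows $d\mu_x(X_v)(u)=c(u,v)(x)$, i.e. the linear part of $\mu_x$ is, up to sign, the isomorphism $c(x)^{\flat}\colon F_x\to F_x^{*}$, invertible exactly because $c(x)|_{F_x}$ is symplectic. Hence $\mu_x$ is an affine isomorphism; in particular the leafwise flow is free and each $L_x\cong\R^{2d}$ — and this is precisely where completeness of the action is used, to turn the leafwise infinitesimal data into a genuine $\R^{2d}$-action. I would then define $N=\{x:H_u(x)=0\ \forall u\in F_x\}$, the zero set of the section $\sigma\in\Gamma(F^{*})$ given by $\sigma(x)=H_{\bullet}(x)|_{F_x}$. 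Since $d\sigma$ is onto along $D$ (again by invertibility of $c(x)^{\flat}$), $N$ is a submanifold of codimension $2d$ meeting each $L_x$ in exactly one point, and $T_x(N\cap\cL)=D_x^{\w}$, the symplectic orthogonal of $D_x$ inside the leaf.

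I would then define $\Phi\colon M\to F|_N$ by $\Phi(x)=-\,(c(n(x))^{\flat})^{-1}\mu_{n(x)}(x)\in F_{n(x)}$, where $n(x)$ is the unique point of $L_x\cap N$; this is frame-independent, and the derivative computation shows that $\Phi|_{L_x}\colon(L_x,\pi^{-1}|_{L_x})\to(F_x,c(x))$ is a symplectomorphism, giving the leafwise statement in (2). For the global diffeomorphism claim I would use that $D$ is a regular rank-$2d$ foliation with embedded leaves $L_x\cong\R^{2d}$ and that $N$ is a global transversal meeting each leaf once, so $M\to N$ is a bundle with contractible fibre and $\Phi$ is a diffeomorphism by smooth dependence of flows. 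The leafwise splitting $T\cL=D_x\oplus D_x^{\w}$ with both summands symplectic is exactly the symplectic-orthogonal condition making $N$ Poisson–Dirac: pointwise $\pi^{\sharp}(\mathrm{Ann}\,T_xN)=D_x$ and $T_xN\cap D_x=D_x^{\w}\cap D_x=0$, with the induced structure smooth, which establishes (1).

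Finally, given a framing $\mathfrak b$ I would identify $F|_N\cong\R^{2d}\times N$; tracing $\Phi$ shows that (up to the linear identification given by $\mathfrak b$) the $\R^{2d}$-coordinate is the flow time, so the fibre $\Phi^{-1}(N\times\{u\})$ is a \emph{uniform} flow-translate $\Psi^{-u}(N)$. As each $X_u$ is Hamiltonian hence Poisson, the flow $\Psi^{-u}$ is a Poisson diffeomorphism and carries the Poisson–Dirac submanifold $N$ to a Poisson–Dirac submanifold, which is the last assertion. I expect the main obstacle to be not the pointwise algebra but the global regularity: verifying that the $D$-foliation is a genuine fibration over $N$ with embedded $\R^{2d}$-leaves, so that $\Phi$ is a diffeomorphism, and checking smoothness of the induced Poisson structure on $N$ in whichever characterization of Poisson–Dirac submanifolds is adopted. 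Both should follow from completeness together with the affine-isomorphism property of $\mu_x$, but this is where the care is required.
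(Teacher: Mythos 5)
Your proposal follows essentially the same route as the paper's proof: the same submanifold $N$ (the zero locus of the $H_u$, $u\in F_x$), the same key computation (the affine formula $H_u(\phi^s_v(x))=H_u(x)+c_{u,v}(x)s$, i.e.\ the abelian case of Proposition~\ref{pro:main}), and the same map $\Phi$, which you write intrinsically as $-(c^\flat)^{-1}\mu$ where the paper writes it as the solution of a linear system. The only real deviation is in the framed case, where you exhibit the fibers $\Phi^{-1}(N\times\{u\})$ as images of $N$ under leafwise-Hamiltonian Poisson flows instead of as level sets of $\psi_{\mathfrak{b}}$; this is an equally valid (and if anything slightly more robust) way to reach the Poisson-Dirac conclusion, so no substantive gap remains beyond the smoothness points that the paper itself also treats briefly.
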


\begin{proof}
We define
\[N=\{x\in M\,|\, H_u(x)=0,\, \forall u\in F_x\}.\]
If $x\in N$, we  fix $\mathfrak{b}$ a local framing around $x$ which is constant on the leaf space. Note that this is always possible as
we can start with $v_1,\dots,v_{2d}$ a basis of $F_x$ and then project orthogonally into nearby fibers used a fixed
inner product on $\mathfrak{g}$.

Using this framing we define the map:
\[\psi_\mathfrak{b}:U\subset M\to \R^{2d},\quad x\mapsto (H_{\mathfrak{b}^{-1}(e_1)}(x),\dots,H_{\mathfrak{b}^{-1}(e_{2d})}(x)),\]
where $e_1,\dots,e_{2d}$ is the canonical basis.
By linearity of the representation $\rho$,  $N\cap U=\psi_\mathfrak{b}(0)$, so it suffices to show that 0 is a regular value.

For $u\in F_x$, we pick $v\in F_x$ so that $c_{u,v}(x)\neq 0$. Because
\[H_u(X_v)=\w(X_u,X_v)=\{H_u,H_v\}=\{H_u,H_v\}-H_{[u,v]}=c_{u,v},\]
$\psi_\mathfrak{b}$ is a submersion restricted to each symplectic leaf.

We also need to prove that $N$ is non-empty. Given $y\in M$, we look for $v\in F_y$ such that $\phi_v^1(y)\in N$. This
is equivalent to:
\begin{equation}\label{eq:system}
0=H_u(\phi_v^1(y))=\zeta_{u,v,y}(1),\, \forall u\in F_y.
 \end{equation}

As in  the abelian case the formula for the flow evaluation (\ref{eqn:expansion})
becomes
\begin{equation}\label{eqn:ab}
\zeta_{u,v,y}(s)=H_u(y)+c_{u,v}(y)s,
\end{equation}
and formula (\ref{eq:system}) becomes the linear system of equations:
\[H_u(y)+c_{u,v}(y)=0\]
By non-degeneracy of $c_{u,v}(y)$, there exist a unique solution of this system. Therefore $N$ is a non-empty submanifold  of $M$ intersecting each leaf transversely.

Now  we define the map:
\[\Phi:M\to F|_N ,\quad y\mapsto (v,\phi^1_v(y)),\]
where $v$ is the unique solution of (\ref{eq:system}). This map is clearly a bijection. It is smooth because for $z$ in a small neighborhood of $y$, the point
$\phi_v^1(z)$ belongs to a small neighbourhood of $\phi^1_v(y)$, where the local trivialization (over the leaf space) can be used
to correct $v$ into the appropriate vector $v(z)$ (and the system we have to solve varies smoothly with the point $z$).  The smoothness of the inverse
\[\Phi^{-1}(u,x)=\phi_{-u}^1(x)\] is clear.

It remains to prove Poisson theoretic properties of $\Phi$. The foliation defined by the fibers of $F|_N$ is transferred by $\Phi$ to a foliation
with leaves $L_x$, $x\in N$. This leaf can be alternatively described as the orbit
through $x$ of the action of $F_x\subset \mathfrak{g}$. In particular, for any $u\in F_x$ the Hamiltonian vector field $X_u|_{L_x}$ is tangent to $L_x$,
and a basis of $F_x$ provides a framing of the tangent bundle $TL_x$. By construction at $y\in L_x$:
\[\pi^{-1}(X_u,X_v)=c_{u,v}(x),\]
thus proving item (2).

Recall from \cite{CF2} that $N\subset (M,\pi)$ is a Poisson-Dirac submanifold if
\begin{equation}\label{eq:dirac}
 T_xN\cap \pi^{\#}(T_xN^0)=\{0\},
\end{equation}
and the induced bi-vector $\pi_N\in \mathfrak{X}^2(N)$ happens to be smooth.

We showed that $T_xN$ is the common kernel of $\{dH_u\,|\,u\in F_x\}$, or, conversely, that the latter subspace of covectors is the annihilator
$T_xN^0$.

As $\pi^\#(dH_u)=X_u\in T_xL_x$, equation (\ref{eq:dirac}) holds.

The smoothness of $\pi_N$ follows from the smoothness
of the distribution defined by the symplectic leaves $L_x$, $x\in N$, because its annihilator at points of $N$ can be identified with $T^*N$,
and $\pi_N^\#$ is the restriction of $\pi^\#$ to this smooth sub-bundle \cite{CF2}.

Finally, if we have a framing $\mathfrak{b}$ of $F\to N$ over the leaf space, then $F$ becomes trivial and we get a diffeomorphism:
\[\Phi: M\to \R^{2d}\times N.\]
The same arguments as above show that $\Phi^{-1}(\{u\}\times N)=\psi_\mathfrak{b}^{-1}(u)$ is a Poisson-Dirac submanifold of $(M,\pi)$.
\end{proof}

If $F\to M$ is a framed sub-bundle over the leaf space and it is symplectic relative to the cocycle $c$, there is no reason why the
framing should trivialize as well the fiberwise linear symplectic forms. If that is the case, then $(M,\pi)$ is in fact a product Poisson manifold:
\begin{theorem}\label{thm:prod2} Let $\rho\colon \fg\rightarrow C^\infty(M)$ be
a complete weakly Hamiltonian action of an abelian Lie algebra and let $c$ be its associated cocycle. If $ F\to M$
is a vector sub-bundle over
the leaf space
 which is symplectic relative to $c$, then a framing over the leaf space compatible with the fiberwise symplectic structure determines a splitting
 of $M$ into a symplectic vector space and a Poisson submanifold:
 \[\Phi:(M,\pi)\to (\R^{2d}\times N, \w^{-1}\oplus \pi|_ N).\]
 \end{theorem}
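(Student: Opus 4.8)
The plan is to promote the diffeomorphism $\Phi\colon M\to \R^{2d}\times N$ already produced in Theorem \ref{thm:1} to a Poisson diffeomorphism onto $(\R^{2d}\times N,\ \w^{-1}\oplus\pi|_N)$. Since a Poisson bivector is a biderivation, it is determined by its brackets on any family of functions whose differentials span the cotangent bundle pointwise. I would therefore compare the two structures only on the generating family consisting of the pulled-back linear coordinates $q_i:=H_{\mathfrak{b}^{-1}(e_i)}$ (the components of $\psi_\mathfrak{b}=\mathrm{pr}_{\R^{2d}}\circ\Phi$, for $i=1,\dots,2d$) together with $\mathrm{pr}_N^*C^\infty(N)$, pulled back along the retraction $r:=\mathrm{pr}_N\circ\Phi\colon M\to N$, $r(y)=\phi^1_v(y)$. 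This reduces the theorem to three bracket computations: (a) $\{q_i,q_j\}=\w_{ij}$ is constant; (b) $\{q_i,g\circ r\}=0$ for all $g\in C^\infty(N)$; and (c) $\{f\circ r,g\circ r\}=\{f,g\}_{\pi_N}\circ r$.

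The decisive reduction is to a single symplectic leaf. Because $\pi^\#$ annihilates the conormal to the leaf $\cL$ through a point $x_0$, every bracket at $x_0$ depends only on the restrictions of the functions to $\cL$. As the framing $\mathfrak{b}$ is constant over $\cL$, each section $\mathfrak{b}^{-1}(e_i)$ takes a fixed value $u_i:=\mathfrak{b}^{-1}(e_i)(x_0)\in F_{x_0}$ along $\cL$, so that $q_i|_\cL=H_{u_i}|_\cL$ is the restriction of a genuine fundamental Hamiltonian and $X_{q_i}(x_0)=X_{u_i}(x_0)$. Since $\fg$ is abelian, $\{q_i,q_j\}(x_0)=\{H_{u_i},H_{u_j}\}(x_0)=c_{u_i,u_j}(x_0)$. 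The extra hypothesis of Theorem \ref{thm:prod2} over Theorem \ref{thm:1} is precisely that $\mathfrak{b}$ carries $c(x)$ to a fixed constant form $\w$, i.e. $c_{u_i,u_j}(x_0)=\w_{ij}$ independently of $x_0$; this yields (a) and is the only place the strengthened assumption enters. For (b), the retraction $r$ collapses each $F$-orbit $L_x$ to its unique point of $N$, so $g\circ r$ is $F$-invariant; as $X_{q_i}=X_{u_i}$ is tangent to $L_x$, we get $\{q_i,g\circ r\}=X_{u_i}(g\circ r)=0$.

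For (c), the Jacobi identity together with (b) shows both sides are $F$-invariant, so it suffices to verify the identity at $x_0\in N$. There $r|_N=\mathrm{id}$ and $r$ is constant on $L_{x_0}$, so $dr(x_0)$ is the projection $T_{x_0}M=T_{x_0}N\oplus T_{x_0}L_{x_0}\to T_{x_0}N$ along the fibration $F|_N\to N$. Restricting to $\cL$, the covector $d(f\circ r)(x_0)$ thus vanishes on $T_{x_0}L_{x_0}$ and restricts to $d(f|_{N\cap\cL})$ on $T_{x_0}(N\cap\cL)$. The splitting $T_{x_0}\cL=T_{x_0}L_{x_0}\oplus T_{x_0}(N\cap\cL)$ is $\w_\cL$-orthogonal: $T_{x_0}L_{x_0}$ is spanned by the $X_u$, $u\in F_{x_0}$, while $T_{x_0}N$ is their common $dH_u$-kernel, so that $T_{x_0}(N\cap\cL)=\{w\in T_{x_0}\cL:\w_\cL(X_u,w)=0\ \forall u\in F_{x_0}\}=(T_{x_0}L_{x_0})^{\w_\cL}$ (symplectic orthogonal complement). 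For such an orthogonal symplectic splitting, $\w_\cL^{-1}$ maps a covector vanishing on $T_{x_0}L_{x_0}$ into $T_{x_0}(N\cap\cL)$, where it realizes the Hamiltonian vector field of $f|_{N\cap\cL}$ in $(N\cap\cL,\ \w_\cL|_{N\cap\cL})$. Hence $X_{f\circ r}(x_0)=X^N_f(x_0)$, and likewise for $g$, so $\{f\circ r,g\circ r\}(x_0)=\w_\cL(X^N_f,X^N_g)(x_0)=\{f,g\}_{\pi_N}(x_0)$, using that $\pi_N=\pi|_N$ is exactly the Poisson structure whose leaves are the symplectic submanifolds $N\cap\cL$ with forms $\w_\cL|_{N\cap\cL}$, as identified in Theorem \ref{thm:1}.

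Combining (a)–(c), the structures $\Phi_*\pi$ and $\w^{-1}\oplus\pi|_N$ agree on a family of functions whose differentials span $T^*(\R^{2d}\times N)$ at every point, hence they coincide, and $\Phi\colon(M,\pi)\to(\R^{2d}\times N,\ \w^{-1}\oplus\pi|_N)$ is a Poisson diffeomorphism. I expect the main obstacle to be step (c): correctly matching the Poisson–Dirac bracket on $N$ with the ambient bracket of $F$-invariant functions, which rests on the $\w_\cL$-orthogonality of the leaf splitting $T\cL=TL_x\oplus T(N\cap\cL)$ and on reading off $dr$ as the associated projection at points of $N$. By contrast, step (a) is immediate once one passes to a single leaf and invokes framing compatibility, and step (b) is a soft consequence of the $F$-invariance of $r$.
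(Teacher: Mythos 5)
Your proof is correct, and it is organized rather differently from the paper's. The paper's proof is three sentences long: $\Phi$ conjugates translations of the $\R^{2d}$-factor to time-one flows of the Hamiltonian vector fields of the functions $H_{\mathfrak{b}^{-1}(u)}$, these flows preserve $\pi$, and the fibers carry the constant symplectic form $\w$ by framing compatibility; the product decomposition is then asserted. What that argument leaves implicit --- the vanishing of the mixed ($\R^{2d}$--$N$) components of $\Phi_*\pi$, and the identification of the $N$-component with the induced Poisson--Dirac bivector $\pi_N$ --- is exactly what your steps (b) and (c) establish, the latter via the $\w_\cL$-orthogonality of the splitting $T_{x_0}\cL=T_{x_0}L_{x_0}\oplus T_{x_0}(N\cap\cL)$ and the reading of $dr(x_0)$ as the associated projection. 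Your use of the Jacobi identity to propagate the bracket identities along the fibers is the infinitesimal avatar of the paper's flow-conjugation argument (Jacobi is precisely the statement that Hamiltonian vector fields preserve $\pi$), so the two proofs run on the same engine; yours buys a complete, checkable reduction to linear symplectic algebra, while the paper's buys brevity at the cost of leaving the decisive identifications to the reader.

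One inaccuracy should be repaired, though it does not affect the logic. You identify the functions $q_i=H_{\mathfrak{b}^{-1}(e_i)}$ with the components of $\mathrm{pr}_{\R^{2d}}\circ\Phi$. In fact the first component of $\Phi(y)$ is the solution $v$ of the system $H_{u_j}(y)+c_{u_j,v}(y)=0$, so in frame coordinates $\mathrm{pr}_{\R^{2d}}\circ\Phi=-\w^{-1}\circ\psi_{\mathfrak{b}}$; equivalently, $q_i$ is the pullback under $\Phi$ of the linear coordinate $\ell_i=-\sum_j\w_{ij}x_j$ rather than of $x_i$. (The paper commits the same conflation at the end of the proof of Theorem \ref{thm:1}, where it writes $\Phi^{-1}(\{u\}\times N)=\psi_{\mathfrak{b}}^{-1}(u)$.) Since the $\ell_i$ again form a linear coordinate system, your family still generates $T^*(\R^{2d}\times N)$ pointwise, but your final step ``the two structures agree on a generating family'' then requires the one-line verification that $\{\ell_i,\ell_j\}_{\w^{-1}}=\w_{ij}$ --- which indeed holds and matches your computation (a) --- rather than a comparison of $\w_{ij}$ with $(\w^{-1})_{ij}$ for the standard coordinates.
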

\begin{proof}
By Theorem \ref{thm:1} we have a diffeomorphism
\begin{equation}\label{eq:prod2}
\Phi:(M,\pi)\to \R^{2d}\times N.
 \end{equation}

A vector $u\in \R^{2d}$ acts on the right hand side of  (\ref{eq:prod2}) by translations of the first factor. The diffeomorphism $\Phi$ conjugates
this translation to the time one flow of the the Hamiltonian vector field
of the function $H_{\mathfrak{b}^{-1}(u)}$. This, together with the fact that the function by hypothesis ${\Phi|_{L_x}}_*c(x)$
is a constant symplectic form $\w$ on $\R^{2d}$, implies that $\Phi$ sends $\pi$ into the product Poisson structure $ \w^{-1}\oplus \pi|_ N$.

\end{proof}

\begin{remark} A special instance where Theorem \ref{thm:prod2} applies is when
there exists $V$ a subspace of $\mathfrak{g}$ which intersects trivially all $\{\ker c|_F\}_{F\in \mathcal{F}_\pi}$. We obtain
a Poisson splitting:
\[(M,\pi)\cong (V\times N, {c|_V}^{-1}\times \pi|_N),\]
given by the action of $V$.

It is worth
pointing out that this  splitting result  can be seen as a global version of
Weinstein splitting theorem. The original proof of the Weinstein's splitting theorem
amounts to constructing a weakly Hamiltonian abelian action of maximal rank with
symplectic cocycle.

It is also possible to reinterpret other splitting results in the Poisson setting using this language. For
example, in \cite{MZ,FM} an equivariant Weinstein splitting
theorem is proved at a fixed point of a Poisson for an action of a compact Lie group.
For a Hamiltonian
action of compact $G$, this result can be restated saying that the (local)
Hamiltonian action of $\fg$ can be extended to a weakly  Hamiltonian action of
an extension $\fg \ltimes \R^{2d}$, where $2d$ is the rank of the Poisson structure at $x$
(the cocycle has kernel $\fg$). From this perspective, the equivariant  Poisson splitting
 follows trivially.

\end{remark}

As for induced actions, the sub-bundle $F\to M$ in Theorem \ref{thm:1} can be regarded as a groupoid, and as such it acts on $M$. As we prefer not
to go to the greater generality of groupoid actions on Poisson manifolds, we confine ourselves to the situation in which the sub-bundle is framed.
This produces an action of $\R^{2d}$ on $(M,\pi)$ with cocycle the pullback of $c|_F$ under the framing (very far from being Hamiltonian). This is the action
we used in the proof of Theorem \ref{thm:prod2}.

We are interested in sufficient conditions for the existence of \emph{``invariant Hamiltonian submanifolds"}:

\begin{theorem}\label{thm:splitpoisson2} Let $\rho\colon \fg\rightarrow C^\infty(M)$ be
a complete weakly Hamiltonian action of an abelian Lie algebra and let $c$ be its associated cocycle.  If
$\ker c\to M$ admits a framing $\mathfrak{b}'$ over the leaf space, then an inner product on $\mathfrak{g}$ determines
a Poisson-Dirac submanifold $N$ and a residual Hamiltonian action
\[\rho_{\mathfrak{b}'}:\R^k\to C^\infty(N)\]
 \end{theorem}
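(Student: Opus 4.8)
The plan is to deduce the statement from Theorem \ref{thm:1} by using the inner product to realize the kernel of the cocycle as the symplectic complement of a symplectic sub-bundle. Since the action is abelian, $[u,v]=0$ and the cocycle reduces to $c_{u,v}=\{H_u,H_v\}$. First I would set $F_x:=(\ker c(x))^\perp$, the orthogonal complement of $\ker c(x)$ inside $\mathfrak g$ for the fixed inner product. Because $c$ is Casimir-valued, $c(x)$ depends only on the symplectic leaf through $x$, hence so do $\ker c(x)$ and $F_x$; thus $F\to M$ is a sub-bundle over the leaf space, and the framing hypothesis on $\ker c$ guarantees the rank is constant. A one-line linear-algebra check shows that the constant $2$-form $c(x)$ restricts non-degenerately to $F_x$: if $u\in F_x$ annihilates all of $F_x$ under $c(x)$, then since it already annihilates $\ker c(x)$ it lies in $\ker c(x)\cap F_x=\{0\}$. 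Hence $F$ is symplectic relative to $c$, and Theorem \ref{thm:1} produces the Poisson-Dirac submanifold $N=\{x\in M\mid H_u(x)=0,\ \forall u\in F_x\}$ with its induced bivector $\pi_N$; here $k=\mathrm{rank}\,\ker c$.

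Next I would build the residual momentum map from the framing. Writing $w_a(x):=(\mathfrak b')^{-1}(a)(x)\in\ker c(x)$ for $a\in\R^k$ (a vector that is constant along each leaf), I set $h_a(x):=H_{w_a(x)}(x)$ and define $\rho_{\mathfrak b'}(a):=h_a|_N$. Two facts are needed: that the fundamental vector fields $X_{w_a}$ are tangent to $N$, and that the $h_a$ pairwise Poisson-commute. For tangency, recall from the proof of Theorem \ref{thm:1} that $T_xN$ is the common kernel of $\{dH_u\mid u\in F_x\}$; since for $u\in F_x$ and $w\in\ker c(x)$ one has $dH_u(X_w)=\{H_u,H_w\}=c_{u,w}(x)=0$ (as $w\in\ker c$), each $X_{w_a}$ is tangent to $N$. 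For commutativity, the bracket sees only the leaf, along which $w_a,w_b$ are constant, so $\{h_a,h_b\}(x)=\{H_{w_a(x)},H_{w_b(x)}\}(x)=c_{w_a(x),w_b(x)}(x)=0$, the last equality because both vectors lie in $\ker c(x)$.

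Finally I would transfer these identities to $(N,\pi_N)$. Using the characterization of the Poisson-Dirac bracket in \cite{CF2}, a function whose Hamiltonian vector field is tangent to $N$ is an admissible (Dirac) extension of its own restriction, whence $\pi_N^\#(d(h_a|_N))=X_{w_a}|_N$ and $\{h_a|_N,h_b|_N\}_N=\{h_a,h_b\}|_N$. Combined with the previous paragraph, this shows simultaneously that $X_{w_a}|_N$ is the $\pi_N$-Hamiltonian vector field of $h_a|_N$ and that $\{h_a|_N,h_b|_N\}_N=0$; since $\R^k$ is abelian, $\rho_{\mathfrak b'}$ is therefore an equivariant, genuinely Hamiltonian action, as claimed. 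The main obstacle is precisely this last transfer: the bracket induced on a Poisson-Dirac submanifold is \emph{not} in general the restriction of the ambient bracket, and the computation closes only because the relevant vector fields $X_{w_a}$ happen to be tangent to $N$, which is what makes each $H_{w_a}$ an admissible extension.
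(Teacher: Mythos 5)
Your proposal is correct and follows essentially the same route as the paper: use the inner product to form $F=\ker c^\perp$ (a sub-bundle over the leaf space, symplectic relative to $c$), apply Theorem \ref{thm:1} to get the Poisson-Dirac submanifold $N$, and induce the $\R^k$-action on $N$ via the framing $\mathfrak{b}'$, whose cocycle is the restriction of $c$ to $\ker c$ and hence vanishes. The only difference is that you spell out details the paper leaves implicit -- the linear-algebra check that $c$ is non-degenerate on $\ker c^\perp$, the tangency of the fields $X_{w_a}$ to $N$, and the admissible-extension argument transferring brackets to the Poisson-Dirac bivector $\pi_N$ -- all of which are sound.
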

\begin{proof}
Because $\ker c\to M$ is a bundle over the leaf space, so is $\ker c^\perp\to M$. By applying Theorem \ref{thm:1} to
$\ker c^\perp\to M$ we obtain a Poisson-Dirac submanifold $N\subset (M,\pi)$. The action on $N$
is induced by the framing $\mathfrak{b}'$ exactly as in the proof of Theorem \ref{thm:prod2}. As the corresponding cocycle is the restriction of $c$ to $\ker c\to N$,
 it follows that the action is Hamiltonian.
\end{proof}

\subsection{ The symplectic case}
As a consequence of the theorem above applied in the symplectic context, a slightly improved version of Souriau's splitting theorem is obtained and  the action can be split into a Hamiltonian factor and a weakly Hamiltonian action corresponding to the first motivating example in this paper:

\begin{corollary}\label{cor:1} Let $\rho\colon \fg\rightarrow C^\infty(M)$ be
a complete weakly Hamiltonian action of an abelian Lie algebra on a symplectic manifold and let $c\in \wedge^2\mathfrak{g}^*$ be its associated cocycle.
Any subspace
$V\subset \mathfrak{g}$ complementary to the kernel of $c$ determines a symplectic splitting
\[\Phi:(M,\w)\to (V\times N,c|_V\oplus \w|_N)\]
and a splitting of the action into a standard translational action on $(V,c|_V)$ and a Hamiltonian one on $(N,\w|_N)$.
\end{corollary}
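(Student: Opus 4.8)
The plan is to specialize Theorem~\ref{thm:prod2} and Theorem~\ref{thm:splitpoisson2} to the symplectic situation, in which all the fiberwise data are constant because there is a single leaf. Since $M$ is connected and symplectic, $\pi$ is nondegenerate with $\pi^{-1}=\w$, the symplectic foliation $\cF$ is the single leaf $M$, and $\mathrm{Cas}(M)=\R$; hence the cocycle $c\in\Omega^2_{CE}(\fg;\mathrm{Cas}(M))$ is a genuine constant form $c\in\wedge^2\fg^*$ and the leaf space $M/\cF$ is a point. Consequently any subspace $V\subset\fg$ yields the trivial sub-bundle $F=V\times M\to M$, which is automatically over the leaf space, and any basis of $V$ is a framing over the leaf space. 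That $F$ be symplectic relative to $c$ means precisely that $c|_V$ is nondegenerate, i.e.\ that $V\cap\ker c=\{0\}$; taking $V$ complementary to $\ker c$ guarantees this and forces $\dim V=\mathrm{rank}(c)=2d$.

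Granting this, the symplectic splitting is immediate from Theorem~\ref{thm:prod2}: the fiberwise symplectic form is the single constant form $c|_V$, so every framing is compatible with it, and we obtain $\Phi\colon(M,\pi)\to(\R^{2d}\times N,\w^{-1}\oplus\pi|_N)$. Identifying $\R^{2d}$ with $V$ via the framing and noting that in the nondegenerate case the Poisson--Dirac submanifold $N=\{x\in M\mid H_u(x)=0,\ \forall u\in V\}$ is a symplectic submanifold carrying $\w|_N$, this is the asserted splitting $\Phi\colon(M,\w)\to(V\times N,c|_V\oplus\w|_N)$.

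To split the action I would decompose $\fg=V\oplus\ker c$ and examine each summand through $\Phi$. For $v\in V$, the proof of Theorem~\ref{thm:prod2} shows that $\Phi$ conjugates the flow of $X_v$ to translation by $v$ on the first factor; as such a translation fixes the $N$-coordinate, $V$ acts exactly as the standard translational action on $(V,c|_V)$ of Section~\ref{sec:standard} and trivially on $N$. For $w\in\ker c$, the key point is that by antisymmetry $c(u,w)=-c(w,u)=0$ for all $u\in V$, so each first integral $H_u$ is constant along $\phi^s_w$; therefore $\phi^s_w$ preserves $N$, and by~(\ref{eqn:ab}) the linear system defining the $V$-coordinate is left unchanged, so $\ker c$ acts trivially on the $V$-factor and restricts to $N$. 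Since the cocycle vanishes on $\ker c$, this residual action on $(N,\w|_N)$ is Hamiltonian, which is the conclusion of Theorem~\ref{thm:splitpoisson2} with $\mathfrak{b}'$ a basis of $\ker c$. The main obstacle I anticipate is not any individual estimate but verifying that these two commuting pieces genuinely decouple into a product action compatible with the product symplectic form; this rests on the single antisymmetry identity above, which at once yields invariance of $N$ under $\ker c$ and invariance of the $V$-coordinate.
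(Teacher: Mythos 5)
Your proposal is correct and follows essentially the same route as the paper, which obtains the corollary precisely by specializing Theorems~\ref{thm:prod2} and~\ref{thm:splitpoisson2} to the symplectic setting, where the single leaf makes the sub-bundle, framing, and compatibility hypotheses automatic. Your added verifications (nondegeneracy of $c|_V$ on a complement of $\ker c$, invariance of $N$ and of the $V$-coordinate under $\ker c$ via antisymmetry, and vanishing of the restricted cocycle giving the Hamiltonian factor) are exactly the details the paper leaves implicit.
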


In the symplectic setting it is useful to consider
 the correction of the weakly Hamiltonian action by the 1-cocyle. Denote by $\mu:M\longrightarrow \mathfrak{g}^*$ the associated momentum map.
In this case
the symplectic manifold $N$ can be interpreted as the Marsden-Weinstein reduction associated to the $V$-action with moment map $\tilde{\mu}=\pi\circ\mu$ where $\pi$ is the projection $i^*$ with $i:V\hookrightarrow \mathfrak g$ .  This identification can be done by considering the pre-image of any element in $V^*$
by $\tilde\mu$ which by construction intersects each orbit of the action at a single point
therefore its quotient by the action of $V$ is $N$.

 Corollary \ref{cor:1} is a  generalization of a (local) result
in \cite{RW71}. There, one assumes the existence of what is called a
semicanonical system of functions in a 2n dimensional symplectic manifold. In
our language these is a weakly Hamiltonian action of an n-dimensional abelian
Lie algebra, so that the differentials of the associated Hamiltonian functions
are linearly independent. It turns out that neither hypothesis on the number of
functions nor on the rank of differentials of the functions in involution are necessary.

\begin{remark}If  $(M,\pi)$ is a Poisson manifold supporting only trivial Casimirs (constants), then
the symplectic splitting theorem holds word by word. As examples of Poisson manifolds with trivial  Casimirs we may consider, for instance,  the Reeb foliation of $S^3$ with
leafwise area form, compact cosymplectic manifolds with non-compact leaves endowed with  natural Poisson structures \cite{guimipi}, and other Poisson manifolds constructed out of them via products, surgeries, etc.
\end{remark}

\section{An application to nilpotent actions}
We end up this paper with an application of Proposition \ref{pro:main}  to  study of nilpotent actions which we consider only in the symplectic context for the sake of simplicity.

The real analyticity of the flow evaluation map is rather powerful, and it has been used  (under a different guise) to draw consequences
on complete actions of nilpotent and semisimple Lie algebras on compact manifolds \cite{De}.

Here we want to point out yet another global results for nilpotent actions:

\begin{corollary}  Let $\rho\colon \fg\rightarrow C^\infty(M)$ be a complete
weakly Hamiltonian effective representation  of a nilpotent non-abelian Lie
algebra on a (non-compact) symplectic manifold. Let $v$ be a vector not in the center of
$\fg$. Then the any periodic of $X_v$ must be contained in the level set of a
non-constant function (in general different from $H_v$).

\end{corollary}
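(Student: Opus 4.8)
The plan is to exploit the rigidity forced by Proposition \ref{pro:main} once the flow evaluation becomes \emph{periodic}. Let $\gamma$ be a periodic orbit of $X_v$ of minimal period $T>0$ and fix a point $x\in\gamma$. For every $u\in\fg$ the function $s\mapsto\zeta_{u,v,x}(s)=H_u(\phi_v^s(x))$ is then $T$-periodic, since $\phi_v^{s+T}(x)=\phi_v^s(\phi_v^T(x))=\phi_v^s(x)$. Thus the entire analytic content of the orbit is encoded in a family of periodic entire functions $\{\zeta_{u,v,x}\}_{u\in\fg}$, to which the expansion (\ref{eqn:expansion}) applies.

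Next I would use nilpotency to turn these entire functions into \emph{polynomials}. Because $\fg$ is nilpotent, the operator $\mathrm{ad}(v)$ is nilpotent, say $\mathrm{ad}^N(v)=0$. Hence in (\ref{eqn:expansion}) the terms $H_{\mathrm{ad}^j(v)(u)}(x)$ and $c_{\mathrm{ad}^{j-1}(v)(u),v}(x)$ vanish once $j\geq N$, so both series truncate and $\zeta_{u,v,x}$ is a polynomial in $s$ of degree at most $N$. A polynomial that is $T$-periodic is constant; therefore $H_u(\phi_v^s(x))\equiv H_u(x)$, i.e. \emph{every} $H_u$ is constant along $\gamma$. (Reading off the coefficient of $s$ also yields the explicit relation $H_{[v,u]}(x)=-c_{u,v}$ on $\gamma$, in the symplectic case where $c$ is a constant element of $\wedge^2\fg^*$.)

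Finally I would produce the advertised integral. As $v\notin\mathfrak{z}(\fg)$, the map $\mathrm{ad}(v)$ is nonzero, so there is $u_0\in\fg$ with $w:=[v,u_0]\neq 0$. By effectiveness of $\rho$ we have $X_w\neq 0$, and since $M$ is symplectic the identity $X_w=\pi^{\#}(dH_w)$ with $\pi^{\#}$ an isomorphism forces $dH_w\neq 0$; thus $H_w$ is a \emph{non-constant} function on $M$. By the previous step $H_w$ is constant along $\gamma$, so $\gamma$ is contained in a level set of the non-constant function $H_w$. Because $w\in[\fg,\fg]$, this integral is in general different from $H_v$ (for instance whenever $v\notin[\fg,\fg]$), which is precisely the point: beyond the tautological conservation of $H_v$ along its own flow, the orbit is pinned inside the level set of a genuinely different function.

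The crux of the argument — and the step I expect to be the main obstacle — is the passage from ``entire of exponential type'' to ``polynomial.'' A periodic entire function of exponential type need not be constant (for example $\sin$), so periodicity alone is useless; it is exactly the nilpotency of $\mathrm{ad}(v)$ that truncates the expansion and makes ``periodic'' equivalent to ``constant.'' Once this truncation is in hand the remainder is immediate. The only other point requiring care is verifying that $H_w$ is genuinely non-constant, for which effectiveness of the representation together with the non-degeneracy of the symplectic form is exactly what is required.
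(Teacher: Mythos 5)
Your proposal is correct and follows essentially the same route as the paper's proof: nilpotency of $\mathrm{ad}(v)$ truncates the expansion (\ref{eqn:expansion}) into a polynomial, compactness/periodicity of the orbit forces that polynomial to be constant, and effectiveness together with non-degeneracy of the symplectic form makes $H_{[v,u]}$ (equivalently, the paper's $H_{[v,u]}-c_{u,v}$, which differs only by the constant value of the Casimir $c_{u,v}$) a non-constant function whose level set contains the orbit. Your explicit remark that periodicity alone cannot conclude anything for entire functions of exponential type (e.g.\ $\sin$) is exactly the reason the paper's argument needs nilpotency, even though the paper leaves this point implicit.
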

\begin{proof} By our assumptions we can find $u\in \fg$ such that according to (\ref{eqn:expansion}) for all $x\in
M$,  $\zeta_{u,v,x}(s)$  is a polynomial with linear coefficient
$H_{[v,u]}(x)-c_{u,v}$, which is a non-constant function on $x$. Hence if we have a periodic
orbit of $X_v$, by compactness $\zeta_{u,v,x}(s)$ must be constant and therefore  must be in the zero subset of $H_{[v,u]}(x)-c_{u,v}$
(and also  in the zero set of functions corresponding to  the coefficients of higher order in (\ref{eqn:expansion})).
\end{proof}

As for an analog of the splitting theorem for abelian complete  weakly
Hamiltonian actions, the situation in the nilpotent case is much more
complicated. One may take a basis of $\fg$ and use the corresponding
Hamiltonians to arrange a function to Euclidean space, and one can obtain the following information:
for a given point $x$ and
$u,v\in \fg$:
\begin{itemize}
\item either $H_{\rm{ad}^{j}v(u)}(x)-c_{\rm{ad}^{j-1}v(u),v}=0$ for all $j$, in which case
the orbit of $X_v$ through $x$ will either not intersect $H_u^{-1}(a)$, $a\in
\R$, or be contained in it;
\item or some  $H_{\rm{ad}^{l}v(u)}(x)-c_{\rm{ad}^{l-1}v(u),v}\neq 0$, in which case for
all $a\in \R$ the orbit of $X_v$ through $x$ intersects $H^{-1}_u(a)$ a finite
number of times.
\end{itemize}

\end{document}